\documentclass[12pt]{amsart}
\usepackage{amsmath, amsthm, amssymb,cite}
\usepackage{fullpage}
\usepackage{color}
\usepackage{hyperref}
\usepackage{soul}

\newtheorem{theorem}{Theorem}
\newtheorem{lemma}{Lemma}

\newtheorem{corollary}[lemma]{Corollary}

\numberwithin{lemma}{section}

\numberwithin{equation}{section}

\newcommand{\R}{{\mathbb R}}

\newcommand{\Z}{{\mathbb Z}}

\renewcommand{\R}{\mathbf R}




\newcommand{\W}{{\mathbf W}}

\renewcommand\R{\mathbf R}


\begin{document}
\vspace*{-.2in}

\title{No solitary waves in 2-d  gravity and capillary waves in deep water }

\author{Mihaela Ifrim}
\address{Department of Mathematics, University of Wisconsin, Madison}
\email{ifrim@math.wisc.edu}

\author{ Daniel Tataru}
\address{Department of Mathematics, University of California at Berkeley}
\email{tataru@math.berkeley.edu}

\begin{abstract}
A fundamental question in the study of water waves is the existence and stability of 
solitary waves. Solitary waves have been proved to exist and have been studied in many 
interesting situations, and often arise from the balance of different forces/factors 
influencing the fluid dynamics, e.g. gravity, surface tension or the fluid bottom.
 However, the existence of solitary waves has remained an open problem
in two of the simplest cases, namely for either pure gravity waves 
or pure capillary waves in infinite depth. 

In this article we settle both of these questions in two space dimensions.
 Precisely, we consider  incompressible, irrotational, 
infinite depth water wave equation, either with 
gravity and without surface tension, or without gravity but with surface tension. 
In both of these cases we prove that there are no solitary waves. 
\end{abstract}

\maketitle

\section{Introduction}

The water wave equations describe the motion of the free surface of an
inviscid incompressible fluid.  Under the additional assumption of
irrotationality, Zakharov~\cite{zak} observed that this motion can be
described purely by the evolution of the fluid surface.

Solitary waves are waves which move with constant velocity while
maintaining a fixed asymptotically flat profile. The question of
existence of solitary waves is a classical problem, going back to
Russell's 1844 experimental observation of such waves.  Equally
interesting is the periodic setting, where one instead seeks periodic
traveling waves.

Water waves come in many flavors depending on a number of physical
parameters, most notably gravity, surface tension (capillarity) and
fluid depth.  Solitary waves have been discovered in many of these
settings in both two and higher dimensions, beginning with the results in finite depth in
\cite{MR0065317}, \cite{MR0445136} and \cite{MR629699}, followed by
the bifurcation result in \cite{MR1133302}. An extensive literature
exists now on this subject, including both results for gravity and for
gravity/capillary waves, see for instance
\cite{MR963906,MR2379653,MR2097656,MR1378603,MR1720395,MR2263898,
MR1949969,MR2867413}
and the review in \cite{MR2097656}. This famously includes for
instance the Stokes waves of greatest height, which have an angular
crest; these were conjectured in \cite{Stokes} and proved to exist in
\cite{MR513927}, \cite{MR666110}.  Within this family of problems, one of the
  most difficult cases turned out to be the case of deep water.

For water waves in deep water, solitary waves
have been recently proved to exist provided that both gravity and
surface tension are present, see \cite{MR1423002,MR2069635,MR2073504,MR2847283}, 
following numerical work in 
\cite{MR988299,MR990170}. However, in the seemingly
simpler cases where exactly one of these forces is active, the
problem has remained largely open, and only partial nonexistence results are known
\cite{MR1949966,MR1455326,MR2993054}.

The goal of the present work is to settle both of these problems in
the two dimensional setting. We show that with either gravity and no
surface tension, or with surface tension but no gravity, no solitary
waves exist in infinite depth.  It remains an open problem to prove a
similar result in higher dimension.

 The difficulty in both of these problems is that the water wave
 equations are not only fully nonlinear but also nonlocal. The
 equations we consider do have a lot of structure, as well as a
 scaling symmetry. However, taking advantage of this structure in the
 classical Eulerian formulation seems untractable at this time,
 particularly for large data solutions. Instead, in the present paper
 we use the holomorphic (conformal) formulation of the equations,
 which is specific to two dimensional problems. In this setting we
 eventually arrive at a simpler, symmetric formulation of the solitary
 wave equations, though still nonlinear and nonlocal.  Our final
 argument is vaguely reminiscent of proofs of the absence of embedded
 resonances for elliptic operators, with an added twist which is due
 to the nonlocality.

\subsection{ The Eulerian equations}
We consider the incompressible, infinite depth water wave
equation in two space dimensions, either with gravity but no surface tension,
or without gravity but with surface
tension. This is governed by the incompressible Euler's equations
with boundary conditions on the water surface. 

We first describe the equations. We denote the water domain at time $t$ by 
$\Omega(t)$, and the water
surface at time $t$ by $\Gamma(t)$. We think of $\Gamma(t)$ as being
either asymptotically flat at infinity or periodic. The fluid velocity
is denoted by $u$ and the pressure is $p$. Then $u$ solves the 
Euler's equations inside $\Omega(t)$, 
\begin{equation}
\left\{
\begin{aligned}
& u_t + u \cdot \nabla u = -\nabla p  -  g j
\\
& \text{div } u = 0
\\
& u(0,x) = u_0(x),
\end{aligned}
\right.
\end{equation}
while on the boundary we have the dynamic boundary condition
\begin{equation}
 p = -2 \sigma {\bf H}  \ \ \text{ on } \Gamma (t ),
\end{equation}
and the kinematic boundary condition 
\begin{equation}
\partial_t+ u \cdot \nabla \text{ is tangent to } \bigcup \Gamma (t).
\end{equation}
Here $g$ represents the gravity, ${\bf H}$ is the mean curvature of the 
boundary and $\sigma$ represents the surface tension.

Under the additional assumption that the flow is irrotational, we can
write $u$ in terms of a velocity potential $\phi$ as $u = \nabla \phi$, 
where $\phi$ is harmonic within the fluid domain, with appropriate decay at infinity.
Thus $\phi$ is determined by its trace on the free boundary
$\Gamma (t)$. Denote by $\eta$ the height of the water surface as a
function of the horizontal coordinate. Following Zakharov~\cite{zak}, we introduce 
$\psi =\psi (t,x)\in \mathbb{R}$ to be the trace of the velocity potential $\phi$ on the boundary,
$\psi (t,x)=\phi (t,x,\eta(t,x))$. Then the fluid dynamics can be expressed in terms of a 
one-dimensional evolution of  the pairs of variables
$(\eta,\psi)$, namely 
\begin{equation}
\left\{
\begin{aligned}
& \partial_t \eta - G(\eta) \psi = 0 \\
& \partial_t \psi + g \eta - \sigma {\bf H}(\eta)  +\frac12 |\nabla \psi|^2 - \frac12 
\frac{(\nabla \eta \cdot \nabla \psi +G(\eta) \psi)^2}{1 +|\nabla \eta|^2} = 0 .
\end{aligned}
\right.
\end{equation}
Here $G$ represents the Dirichlet to Neumann map associated to the fluid domain.
This is Zakharov's  Eulerian formulation of the gravity/capillary water wave equations. One limitation of this 
formulation is that it assumes that the fluid surface is a graph.

It is known since Zakharov~\cite{zak} that the water-wave 
system is Hamiltonian, where the Hamiltonian (conserved energy) is given by
\[
\mathcal{H}(\eta ,\psi)= \frac{g}{2} \int_{\mathbb{R}}\eta^2\, dx +\sigma \int_{\mathbb{R}}
\left( \sqrt{1+\eta_x^2}-1 \right)dx +\frac{1}{2}\int_{\mathbb{R}}\int_{-\infty}^{\eta} \vert \nabla_{x,y}\phi\vert^2 \, dx\, dy.
\]

The momentum is another conserved quantity which arises by Noether's
theorem as a consequence of the fact that the problem is invariant
with respect to horizontal translations (see Benjamin and Olver
\cite{MR688749} for a complete study of the invariants and symmetries
of the water-wave equations):
\[
\mathcal{M}=\int_\mathbb{R}\int_{-\infty}^{\eta(t,x)}\phi_x (t,x,y) \, dy\,  dx.
\]

Solitary waves are solutions to the water wave equations which have a constant profile which moves 
with constant speed,
\[
(\eta,\psi)(x,t) = (\eta,\psi)(x-ct),
\]
and which decay at infinity. Here $c$ represents the horizontal speed of propagation for the free surface.
The horizontal fluid speed, however, is in general not equal to $c$.

\subsection{ Scale invariance and critical regularity}

We now discuss natural regularity assumptions for solitary waves in the Eulerian setting.
The energy space for solutions $(\eta,\psi)$, provided by the Hamiltonian, is different for gravity 
versus capillary waves,
\[
E^g := L^2 \times \dot H^\frac12, \qquad  E^c := \dot H^1 \times \dot H^\frac12.
\]

We will not a-priori assume that a solitary wave has to have finite
energy, though a-posteriori this will be established as an
intermediate step within the proof.  However, in order to study the
existence of solitary waves, it is natural to consider solutions which
have at least critical regularity.  This is best understood from the
scaling properties of the equation, which also differ depending on
whether we consider gravity or capillary waves.

For gravity waves, the equations admit the scaling law 
\[
(\eta(x,t), \psi(x,t)) \to (\lambda^{-1} \eta(\lambda x, \lambda^{\frac12} t), \lambda^{-\frac32}\lambda x, \lambda^{\frac12} t).
\]
Thus one might be led to consider $(\eta,\psi)$ in the space $\dot
H^\frac32 \times \dot H^2$.  However, this is not accurate due to the
fact that the water wave system, viewed  at the linearized level, is
degenerate hyperbolic and in non diagonal form. Because of this, one
needs to work instead with \emph{Alihnac's good variable}, or in other words the 
diagonal variable,  which in
this context is best understood in terms of the differentiated
variables, namely $(\eta_x, \nabla \phi_{|y = \eta})$.  A scale
invariant space for these variables is for instance
\[
E^{g}_{crit} := \dot H^\frac12   \times \dot H^1 .
\]
Here we could also add the $L^\infty$ norm for $\eta_x$, which would provide 
the Lipschitz property for the free surface. 

For capillary waves, the equations admit the scaling law 
\[
(\eta(x,t), \psi(x,t)) \to (\lambda^{-1} \eta(\lambda x, \lambda^{\frac32} t), \lambda^{-\frac12}\lambda x, \lambda^{\frac32} t).
\]
Here the spaces are the same whether we look at $(\eta_x, \nabla \phi_{|y = \eta})$ or $(\eta_x,\psi_x )$, namely
\[
E^{g}_{crit} := \dot H^\frac12 \times L^2.
\]

Our a-priori assumptions on the solitary waves will be slightly
stronger than the above critical norms, but still at critical scaling;
this is in part due to the need to define the conformal map in a way
that does not looses the critical regularity. In particular the $\dot
H^\frac12$ norm for $\eta_x$ is not good enough, as it does not yield
a Lipschitz bound on $\Gamma$. Instead, we shall use a stronger bound,
namely the norm in the homogeneous Besov space $\dot
B^{\frac12}_{2,1}$, where the dyadic pieces of $\eta_x$ are still
measured in $\dot H^{\frac12}$ but summed in $l^{1}$.  Precisely, this
is defined in terms of a standard dyadic Littlewood-Paley
decomposition
\[
1 = \sum_{k \in \Z} P_k,
\]
where the projectors $P_k$ select functions with frequencies $\approx 2^k$, as
\[
\| u \|_{\dot B^{\frac12}_{2,1}} = \sum_{k \in \Z} 2^{\frac{k}2} \| P_k u\|_{L^2}. 
\]

\subsection{ Solitary waves and the non-existence result}

Now we are in a position to provide the Eulerian formulation of our main results. 
We begin with the gravity waves:

\begin{theorem}
\label{t:g-euler}
The two dimensional gravity wave equation in deep water admits no solitary waves 
$(\eta,\psi)$ with  critical regularity $\eta_x \in \dot B^{\frac12}_{2,1},  \nabla \phi_{|y = \eta}\in 
 \dot H^1$.
\end{theorem}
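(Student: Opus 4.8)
The plan is to pass from the Eulerian picture to the holomorphic (conformal) formulation, where the free surface becomes the real axis and the unknowns are traces of holomorphic functions in the lower half-plane. For a solitary wave travelling at speed $c$, the time derivative becomes $-c\partial_x$, so the evolution system collapses to a coupled system of nonlinear nonlocal ODEs (really pseudodifferential equations) for the conformal position $W$ and the holomorphic velocity potential $Q$, both decaying at infinity. The first task is to justify this reduction at critical regularity: the a priori hypothesis $\eta_x\in\dot B^{1/2}_{2,1}$ gives a genuinely Lipschitz, asymptotically flat curve, so the Riemann map and its derivative are well defined and nondegenerate, and $\nabla\phi|_{y=\eta}\in\dot H^1$ transfers to the corresponding regularity for the holomorphic data. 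I would record the resulting profile equations, use the gravity scaling to normalize $g$ (say $g=1$), and then massage the two equations — eliminating one unknown, diagonalizing into Alinhac-type good variables — into a single self-contained symmetric equation. I expect this to be the equation the introduction alludes to: \emph{simpler, symmetric, still nonlinear and nonlocal}.

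Next I would analyze that reduced equation. The strategy I would pursue is a virial/Pohozaev-type identity combined with the ``no embedded eigenvalues'' philosophy flagged in the introduction. Concretely: multiply the profile equation by a well-chosen multiplier (the natural candidates are the solution itself, giving an energy identity, and $x\partial_x$ applied to the solution, giving the dilation identity tied to the scaling symmetry), integrate over $\R$, and carefully track the nonlocal terms via the Cauchy/Hilbert-transform algebra of holomorphic functions (in particular the identity that the Hilbert transform of a product of two lower-half-plane functions is controlled by their product). The conservation laws — energy $\mathcal H$ and momentum $\mathcal M$ — should reappear here as the boundary-free pieces of these identities; since a solitary wave is stationary in the moving frame, the flux versions of these conservation laws force algebraic relations among $c$, $\|W\|$, $\|Q\|$. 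The goal is to combine the energy identity and the dilation identity into a single relation of the form (positive quantity) $=$ (zero or strictly-signed quantity), forcing the solution to vanish.

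The main obstacle, and where the ``added twist due to nonlocality'' lives, is that the naive virial identity does \emph{not} close: the Hilbert transform is not a local operator, so $x\partial_x$ does not interact cleanly with it, and the commutator $[\,x\partial_x, |D|\,]$-type terms produce exactly the nonlocal remainders that obstruct a clean sign. My plan to handle this is two-fold. First, exploit the holomorphic structure: work with the traces of holomorphic functions and use contour-integration / Cauchy-integral identities rather than real-variable integration by parts, so that many apparently dangerous nonlocal terms are revealed to be perfect derivatives or to have a definite sign because of the one-sided spectral support. Second, if a single multiplier is insufficient, use a one-parameter family of multipliers (a Morawetz-type weight $a(x)$ with $a'\ge 0$, $a$ bounded, chosen adapted to the symbol of the nonlocal operator) and optimize; the decay of the solitary wave at infinity is what makes all boundary terms vanish and legitimizes the integrations. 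A secondary technical point I would need to nail down is elliptic regularity/bootstrapping: upgrading the critical-regularity solitary wave to enough smoothness and decay to justify every integration by parts and every contour shift, and to guarantee the a posteriori finite-energy statement promised in the introduction. Once the reduced symmetric equation is in hand, I expect the rigidity argument itself to be short; the real work is the reduction and the choice of multiplier that tames the nonlocal error.
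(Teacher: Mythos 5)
Your overall architecture matches the paper's: pass to conformal coordinates (the paper's Lemma 2.1 transfers the $\dot B^{\frac12}_{2,1}$ and $\dot H^1$ hypotheses to $W_\alpha$ and $R$ and gives $|1+W_\alpha|\geq\delta$), reduce the travelling-wave system to a single Babenko-type equation for $W$, and then run a multiplier/rigidity argument. But the decisive step is not carried out, and your primary proposed mechanism would not close. The paper's reduced equation is $gW = P[iVW_\alpha]$ with $V = c^2 + ig(W-\bar W)$ \emph{real and bounded}; this realness is the entire point, because pairing with $\chi\,\bar W_\alpha$ for a real weight $\chi$ and taking real parts annihilates the local term $\Re\int \chi\, iV|W_\alpha|^2\,d\alpha = 0$ exactly, leaving only the commutator $[P,\chi]$. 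Your primary multipliers do not exploit this: testing with the solution itself gives nothing beyond what the momentum relation $Q_\alpha = cW_\alpha$ already encodes, and the dilation multiplier $\alpha\partial_\alpha$ requires weighted integrability that is simply not available --- no decay is assumed (the paper must bootstrap low frequencies through the equation, via Moser estimates in $\dot B^{\frac12}_{2,1}$, just to conclude $W\in L^2$), so your remark that ``the decay of the solitary wave at infinity is what makes all boundary terms vanish'' presumes something that has to be proved, and in fact pointwise decay is never obtained. Your fallback --- a bounded monotone Morawetz weight $a$ with $a'\geq 0$ --- is exactly the paper's choice $\chi_r = \chi(\alpha/r)$, but you present it as a secondary option rather than the engine of the proof.

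Even granting the right multiplier, there is a second, sharper gap. With $\chi_r$, the good term is $g\int\chi_r'|W|^2\,d\alpha = O(r^{-1})$, while the Coifman--Meyer commutator bound gives $\|[P,\chi_r]W_\alpha\|_{L^2}\lesssim r^{-1}\|W\|_{L^2}$, so the error term is \emph{of the same order} as the main term and nothing closes. The paper's resolution (its Lemma 3.2) is a qualitative refinement: $r\,\|[P,\chi_r]W_\alpha\|_{L^2}\to 0$ as $r\to\infty$, proved by splitting $W$ at a frequency threshold $\lambda$ and observing that only frequencies $\lesssim r^{-1}$ of $W$ contribute to the commutator; one then sends $r\to\infty$ first and $\lambda\to 0$ second. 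This is the ``added twist due to nonlocality,'' and it is not anticipated in your plan: the dangerous commutator is neither a perfect derivative nor sign-definite by one-sided spectral support, as you hope, but merely asymptotically negligible relative to $r^{-1}$. Without identifying the real-coefficient structure of the Babenko equation and this $o(r^{-1})$ commutator estimate, the rigidity step remains open.
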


Here it is not essential to specify also the regularity of the
velocity $\nabla \phi$ on the free surface, as for solitary waves this
is immediately seen to be a consequence the regularity of $\eta$.

In earlier work it was shown\footnote{This is both for the two and the
  three dimensional problem.} in \cite{MR1949966} that there are no
solitary waves with either positive elevation $(\eta \geq 0)$ or
negative elevation $(\eta \leq 0)$ in both two and three dimensions, 
and more recently, that there are no two dimensional 
solitary waves with at least $|x|^{-1-\epsilon}$ decay at infinity,
see \cite{MR1455326,MR2993054}.  The expansion at infinity is
discussed in greater detail in \cite{2016arXiv160401092W}.

Compared with these earlier works, here we impose no sign condition on
$\eta$, as well as no decay condition on $\eta$; indeed, our only
``decay'' assumption is $\eta_x \in  \dot B^{\frac12}_{2,1}$. In view of the embedding 
 $\dot B^{\frac12}_{2,1} \subset L^\infty$ this does require $\eta$ to be Lipschitz continuous
and thus of at most linear growth. However,  it allows for functions 
with almost linear growth at infinity, which is the natural critical assumption,
and far from  any actual decay condition.

Here it is important to work at low regularity, as the solitary wave
equations are not necessarily elliptic, and in particular may allow
angular crests, e.g. as in the Stokes waves which have $2\pi/3$
angles. It is known that such crests can locally occur only at points
of maximal elevation, and that the crest points are stationary point for the flow,
where the fluid speed and the solitary wave speed coincide.
Away from these stationary points for the flow, the 
 equations for the solitary waves are seen to be elliptic, and  the 
solitary waves must be $C^\infty$. 

In this work we do not impose any condition to guarantee the absence
of stationary points, though the above Besov regularity property does
preclude angular crests. However, while proving the result we give a
stronger form of it in conformal coordinates, see
Theorem~\ref{t:g-hol}, which also allows for angular crests. Then, at
the end of Section~\ref{s:g}, we show that this form excludes crests
with angles $\theta \in (\pi/2,\pi)$, and in particular it excludes
Stokes types crests, which are the only ones consistent with the
solitary wave equations.

\bigskip

Next we continue with our result for capillary waves, where we use similar a-priori bounds.
Here there are no  prior results in this direction, as far as we are aware.

\begin{theorem}
\label{t:c-euler}
The two dimensional capillary wave equations in deep water admits no
solitary waves $(\eta,\psi)$ with critical regularity $\eta_x \in
\dot B^{\frac12}_{2,1}$, $\nabla \phi_{|y = \eta}\in 
 L^2$.
\end{theorem}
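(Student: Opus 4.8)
The plan is to run the capillary analogue of the gravity argument, passing to the holomorphic (conformal) formulation of the equations, deriving from the solitary wave ansatz a symmetric, self-contained nonlinear nonlocal equation for the holomorphic unknowns, and then establishing a Liouville-type rigidity result showing the only solution is the trivial one. First I would conformally parametrize the fluid domain $\Omega$ (which, thanks to $\eta_x \in \dot B^{1/2}_{2,1} \subset L^\infty$, is a Lipschitz domain, so the conformal map exists and does not lose the critical regularity) by the lower half-plane, producing holomorphic traces on the real line; the solitary wave condition $(\eta,\psi)(x,t)=(\eta,\psi)(x-ct)$ becomes, after subtracting the transport term $c\partial_\alpha$, a stationary elliptic-type system. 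As in the gravity case I expect this to reduce to a clean pair of equations relating the holomorphic height variable $W$ and the holomorphic velocity variable $Q$ (or their differentiated/diagonal ``good'' counterparts), in which the capillary term contributes the curvature operator — morally a second-order nonlocal operator $\partial_\alpha\left(\frac{\partial_\alpha \mathrm{Im}\, W}{|1+\partial_\alpha W|^{?}}\right)$ linearized to $|D|\partial_\alpha$ type behavior — replacing the zeroth-order gravity term $g\eta$. The key structural point to extract here is a symmetric bilinear/quadratic identity: multiplying the system by the appropriate (conjugate) combination of unknowns and integrating, one should obtain a coercive quadratic form that vanishes, forcing the solution to be trivial.

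The main steps, in order, are: (1) justify the conformal change of coordinates at the stated critical regularity, obtaining holomorphic functions in the lower half-plane whose boundary traces satisfy the transformed equations, and record that the energy space membership $\eta_x \in \dot H^{1/2}$, $\nabla\phi_{|y=\eta}\in L^2$ is preserved (a-posteriori finite energy, as announced in the introduction); (2) insert the traveling-wave ansatz and simplify, commuting $\partial_t \mapsto -c\partial_\alpha$ through the Dirichlet–Neumann/Hilbert-transform structure, to arrive at the holomorphic solitary wave system — the capillary analogue of Theorem~\ref{t:g-hol}; (3) symmetrize: identify the right algebraic combination of $W$, $Q$ (and $c$) so that the system takes the form $\mathcal{L}w = N(w)$ with $\mathcal{L}$ a positive nonlocal operator (here built from $|D|$ because of surface tension, which is actually \emph{better} behaved than the gravity case) and $N$ a genuinely quadratic-or-higher nonlinearity; (4) derive the rigidity: pair the equation against $\bar w$, use the holomorphy (so that half-line Fourier support kills the indefinite cross terms, in the spirit of absence-of-embedded-resonances arguments), and show the quadratic part dominates the nonlinear part under a smallness/criticality bootstrap — or, if no smallness is available a priori, extract a sign from a conserved-flux or Pohozaev-type identity using the scaling symmetry noted in the introduction; (5) conclude $w\equiv 0$, hence $\eta \equiv 0$, $\psi \equiv 0$.

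The hard part will be step (4), the rigidity mechanism, for exactly the reason flagged in the introduction: the problem is nonlocal, so the usual ODE-style unique-continuation or maximum-principle arguments are unavailable, and one must instead exploit the holomorphic structure — the fact that the unknowns extend holomorphically to a half-plane and thus have one-sided Fourier support — to convert the absence of ``embedded resonances'' into an honest positivity statement. A secondary difficulty, specific to the capillary case versus the gravity case, is that the linearized operator is now second order (from the curvature) rather than first order, so the nonlinearity sits at a different relative scaling; however, since surface tension regularizes at high frequencies, I expect the balance to work \emph{in our favor} compared to Theorem~\ref{t:g-euler}, and the low-frequency behavior to be controlled by the critical Besov assumption on $\eta_x$ exactly as in the gravity proof. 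One should also double-check that no stationary points of the flow (which in the capillary setting would again correspond to fluid speed equal to $c$) obstruct the argument; as in the gravity case, the Besov regularity should preclude the problematic angular-crest scenarios, and away from such points the solitary wave equations are elliptic and the solution smooth, so the rigidity argument applies.
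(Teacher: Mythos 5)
Your overall frame (conformal coordinates, traveling-wave reduction to a stationary nonlocal system for the holomorphic variables, then a Liouville-type rigidity step) does match the paper's, but the proposal stops short of the two ideas that actually make the capillary proof work, and the rigidity mechanism you sketch in step (4) would not close. First, the algebraic reduction: after inserting the ansatz one arrives at the self-contained equation $i\sigma\,\partial_\alpha\bigl((1+\W)/|1+\W|\bigr)=c^2\bigl[\W+\bar \W/(1+\bar \W)\bigr]$ for $\W=W_\alpha$, and the decisive move is the substitution $T=\log(1+\W)=U+iV$, holomorphic, so that $U$ and $V$ are Hilbert-transform conjugates. This collapses the fully nonlinear curvature term into the single real scalar equation $\sigma HU_\alpha=2c^2\sinh U$, with no projections left. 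Your step (3) asks for a splitting $\mathcal{L}w=N(w)$ with $\mathcal{L}$ positive and $N$ quadratic-or-higher; no such splitting is used, and none is available for large data --- the logarithm substitution is what replaces it.

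Second, the rigidity. A smallness/criticality bootstrap is not an option: the theorem concerns arbitrary solitary waves with no smallness anywhere, so ``the coercive part dominates the nonlinearity'' has no foothold. Pairing against $\bar w$ and invoking one-sided Fourier support also does not yield a sign-definite quadratic form here. What the paper does instead is a weighted monotonicity identity: multiply $\sigma HU_\alpha=2c^2\sinh U$ by $\chi(\alpha/r)\,U_\alpha$ and integrate. The right-hand side becomes $-2c^2\int\chi_r'\,(\cosh U-1)\,d\alpha$, which is signed because $\cosh U\geq 1$; the left-hand side reduces, by antisymmetry of $H$, to a pure commutator term involving $[H,\chi_r]U_\alpha$, and the whole point is Lemma~\ref{l:com}: $r\,\|[H,\chi_r]U_\alpha\|_{L^2}\to 0$ as $r\to\infty$, a qualitative refinement of the Coifman--Meyer bound exploiting that only frequencies $\lesssim r^{-1}$ contribute. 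Letting $r\to\infty$ forces $\int(\cosh U-1)\,d\alpha=0$, hence $U\equiv 0$, then $V$ constant and $\W\equiv 0$. Your ``Pohozaev-type identity'' instinct is the closest in spirit, but without the specific multiplier $\chi_r U_\alpha$, the commutator gain, and the preliminary regularity bootstrap (low-frequency: $W\in L^2$; high-frequency: $\W\in H^\infty$, both needed to justify the integrations), the argument does not close. The crest discussion in your last paragraph is also not needed in the capillary case; it is an issue only for gravity waves.
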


Our approach for both of these problems relies on the use of the
holomorphic formulation of the equations, which uses conformal
coordinates within the fluid domain. This approach\footnote{Babenko
  does not phrase his approach explicitly in terms of conformal
  coordinates, but the outcome is nevertheless the same.}  has been
initially used precisely in the study of periodic traveling waves in
work of Babenko~\cite{MR898306,MR899856}, and later for solitary waves
in \cite{MR1133302,MR1764946,MR2993054} and many other works.  It has also 
been implemented in the study of the dynamical problem for gravity waves in work of
Wu~\cite{wu2}, Dyachenko-Kuznetsov-Spector-Zakharov~\cite{zakharov},
and more recently by the authors and Hunter~\cite{HIT}, as well as for
capillary waves by the authors in \cite{ITCapillary}.  The equivalence 
between the solitary wave equation and the Babenko equation has also been explored in 
\cite{MR2458311,MR2993054}.

 In addition to the Babenko equations \cite{MR898306}, an alternate method is to use a
different conformal transformation, called a hodograph transform, see
e.g. \cite{MR1423002} and references therein. This last method applies 
only for the study of  steady flows, and not for the dynamical problem.

In the present work we derive a set of equations for steady gravity  waves in
holomorphic coordinates, which are easily seen to imply the Babenko
equations for gravity waves; however, the converse is far less
obvious. The equivalence of the Eulerian and the conformal formulation
of the solitary wave equations has been extensively studied, see \cite{MR2458311} and the 
references therein.

Similarly, we also produce a related set of equations for
capillary waves.  This is done in the next section, where we also
recall the holomorphic form of the water wave evolution.

In the last two sections we prove the solitary wave nonexistence
result, including also a statement of the main results which applies
directly in holomorphic coordinates. This is a stronger statement, as
in particular it does not require the free fluid surface to be a
graph, and also precludes angular crests in the case of gravity waves.

\subsection*{Acknowledgments} The first author
was partially supported by a Clare Boothe Luce Professorship. The second  author was
partially supported by the NSF grant DMS-1800294 as well as by a Simons Investigator
grant from the Simons Foundation.

\section{Water waves in holomorphic coordinates and the 
Babenko equations }

\subsection{Holomorphic coordinates}
One main difficulty in the study of the above equations is the
presence of the Dirichlet to Neumann operator $\mathcal D$, which
depends on the free boundary.  This is one of the motivations for our
choice to use a different framework to study these equations, namely
the holomorphic coordinates. In the context of the dynamical problem
these were first introduced by Ovsiannikov~\cite{ov}, and further
developed by Wu~\cite{wu} and
Dyachenko-Kuznetsov-Spector-Zakharov~\cite{zakharov}, and were heavily
exploited by the authors in earlier work \cite{HIT}, \cite{IT-global}
in the context of two dimensional gravity water waves and
\cite{ITCapillary} for two dimensional capillary waves.

The holomorphic coordinates are defined via a conformal map
$Z: \mathbb{H} \to \Omega(t)$, where $\mathbb{H}$ is the
lower half plane, $\mathbb{H}:=\left\lbrace \alpha -i\beta\ : \ \beta
  > 0 \right\rbrace $. Here $Z$ maps the real line into the
free boundary $\Gamma (t)$. Thus the real coordinate $\alpha$
parametrizes the free boundary, which is denoted by $Z(t,\alpha)$. 
Requiring $Z$ to satisfy the condition
\[
\lim_{\alpha \to \pm \infty} Z_\alpha = 1
\]
uniquely determines it modulo horizontal translations.
If in addition the fluid surface is assumed to be asymptotically flat then
one can remove this remaining degree of freedom 
in the choice of  the parametrization by imposing a stronger boundary
condition at infinity,
\[
\lim_{\alpha \to \pm \infty} Z(t,\alpha) - \alpha = 0.
\]

In order to describe the velocity potential $\phi$ we use the
 function $Q= \phi+iq$ where $q$ represents the harmonic
 conjugate of $\phi$ (the stream function). Both functions $Z$ and $Q$ admit Lipschitz
 holomorphic extensions into the lower half plane, which implies that
 their Fourier transforms are supported in $(-\infty,0]$.  By a slight abuse of terminology we call
 such functions holomorphic functions. They can be described by the relation $Pf = f$,
where  $P$ represents the projector operator  to negative frequencies. For later use we recall
that $P$ can be represented as
\[
P:=\frac{1}{2}\left(I-iH\right),
\] 
where $H$ is the classical Hilbert transform on the real line.

The water wave equations will  define a flow in the class of holomorphic functions
for the pair of variables $(W,Q)$ where $W := Z-\alpha$.

For a full derivation of the holomorphic form of gravity/capillary water waves we refer 
the reader  to the paper \cite{HIT} for gravity waves, and \cite{ITCapillary} for capillary waves.
The equations have the form
\begin{equation}
\label{ww2d}
\left\{
\begin{aligned}
& W_t + F (1+W_\alpha) = 0\\
& Q_t + F Q_\alpha  - i g W + P\left[ \frac{|Q_\alpha|^2}{J}\right] -\sigma P\left[ \frac{-i}{2+W_{\alpha}+\bar{W}_{\alpha}}\frac{d}{d\alpha}\left( \frac{W_{\alpha}-\bar{W}_{\alpha}}{\vert 1+W_{\alpha}\vert }\right)\right]  = 0, \\
\end{aligned} 
\right.
\end{equation}
where
\begin{equation}
\label{rww2d}
 F := P\left[\frac{Q_\alpha - \bar Q_\alpha}{J}\right], \qquad J := |1+W_\alpha|^2.
 \end{equation}
Simplifying we obtain the fully nonlinear system
\begin{equation}\label{ngst}
\left\{
\begin{aligned}
& W_t + F (1+W_\alpha) = 0 \\
& Q_t + F Q_\alpha -i g W + P\left[ \frac{|Q_\alpha|^2}{J}\right] +i\sigma P\left[ \frac{W_{\alpha \alpha}}{J^{1/2}(1+W_{\alpha})}-\frac{\bar{W}_{\alpha \alpha}}{J^{1/2}(1+\bar{W}_{\alpha})}\right]  = 0.\\
\end{aligned} 
\right.
\end{equation}

This is a still Hamiltonian system, where the Hamiltonian  has a simple form 
when expressed in holomorphic coordinates:
\begin{equation*}
\mathcal{H}(W, Q)=\int \Im (Q\bar{Q}_{\alpha})
+2\sigma \left( J^{\frac{1}{2}} - 1 -\Re W_\alpha\right)  + g \left(  |W|^2 - 
\frac12 (\bar W^2 W_\alpha + W^2 \bar W_\alpha)\right)  d\alpha.
\end{equation*}

The invariance with respect to horizontal translations leads by Noether's theorem
to the conservation of momentum,
\begin{equation*}
\mathcal{M}(W,Q)=i\int  \bar{Q}W_{\alpha}-Q\bar{W}_{\alpha} \, d\alpha .
\end{equation*}

One downside of working in holomorphic coordinates is that  
here the symplectic form is not as simple as in the Eulerian case.
Fortunately this is not needed in the present paper.

Since $W$ and $Q$ only appear in differentiated form in the equations above,
differentiating with respect to $\alpha$ yields a self contained 
system for $(W_\alpha,Q_\alpha)$. This system is best expressed 
using \emph{ Alihnac's good variable}, which in this case has the form
\[
 (\W,R) = \left( W_\alpha,  \frac{Q_\alpha}{1+W_\alpha} \right).
\]
Here $1+\W$ describes the slope of the free surface and $R$ is simply the velocity vector
in complex notation, both in the holomorphic parametrization. 
The system for $(\W,R)$ has the form
\begin{equation} \label{diff-ngst}
\left\{
\begin{aligned}
 & \W_{ t} + b \W_{ \alpha} + \frac{(1+\W) R_\alpha}{1+\bar \W}   =  (1+\W)M
\\
& R_t + bR_\alpha +\frac{ i(g+ a)}{1+\W} + i\sigma \frac{1}{1+\W}P\left[ \frac{\W_{ \alpha}}{J^{1/2}(1+\W)}\right]_{\alpha}  = i \sigma P \left[\frac{\bar{\W}_{ \alpha}}{J^{1/2}(1+\bar{\W})}\right]_{\alpha},
\end{aligned}
\right.
\end{equation}
where the real advection velocity $b$ is given by 
\begin{equation}\label{b-def}
b := 2 \Re P\left[ \frac{Q_\alpha}{J}\right].
\end{equation}
The real {\em frequency-shift} $a$ is given by
\begin{equation}
a := i\left(\bar P \left[\bar{R} R_\alpha\right]- P\left[R\bar{R}_\alpha\right]\right),
\label{defa}
\end{equation} 
and  the auxiliary function $M$ is given by
\begin{equation}\label{M-def}
M :=  \frac{R_\alpha}{1+\bar \W}  + \frac{\bar R_\alpha}{1+ \W} -  b_\alpha =
\bar P [\bar R Y_\alpha- R_\alpha \bar Y]  + P[R \bar Y_\alpha - \bar R_\alpha Y].
\end{equation}
Here we used the notation $Y:=\dfrac{\W}{1+\W}$. Primarily, the equations \eqref{ngst} 
will be used in the sequel. However the above discussion motivates the fact that all the results here 
can also be phrased  in terms of the differentiated variables.

\subsection{ The regularity of $W_{\alpha}$ and $R$}

Here we transfer the a-priori regularity of the Eulerian variables to the holomorphic 
variables $(W,Q)$. 

\begin{lemma}
\noindent 
a)  Assume that $\eta_x \in \dot B^{\frac12}_{2,1}$. Then
$W_{\alpha} \in \dot B^{\frac12}_{2,1}$ and the following inequality holds
\begin{equation}\label{bi-lip} 
   1+\Re W_{\alpha} \geq \delta > 0.
\end{equation}
$b)$ Assume in addition that the trace $\nabla \phi |_{y=\eta}$ of the
velocity on the free surface is in $L^2$, respectively $\dot
H^1$. Then $R$ also has regularity $L^2$, respectively $\dot
H^1$.
\end{lemma}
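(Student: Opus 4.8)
The plan is to translate the two Eulerian regularity assumptions through the conformal parametrization in two stages, first dealing with the geometry of the free surface (part a), then with the velocity trace (part b). For part a, the key point is that the conformal map $Z:\mathbb H\to\Omega(t)$ restricted to the real line is, by construction, the harmonic extension of the boundary curve reparametrized to arc-related holomorphic coordinates; concretely, $Z_\alpha$ is holomorphic, $\lim_{\alpha\to\pm\infty}Z_\alpha=1$, and the imaginary part of $Z$ on $\R$ is $\eta$ evaluated at the real part of $Z$. So the starting observation is that $Z-\alpha=W$ is holomorphic and hence $\W=W_\alpha$ satisfies $\W=-iH\,\Re\W$ up to the constant $1$; equivalently $\Im W_\alpha = H(\Re W_\alpha)$. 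The bilipschitz bound \eqref{bi-lip}, i.e. $1+\Re W_\alpha\ge\delta>0$, is essentially the statement that the conformal map does not degenerate, which is a consequence of the Lipschitz character of the graph $\eta_x\in\dot B^{1/2}_{2,1}\subset L^\infty$ together with a continuity/connectedness argument: the quantity $1+\Re W_\alpha$ is continuous, tends to $1$ at infinity, and cannot vanish because that would force the tangent to the image curve to become vertical while the original curve is a Lipschitz graph. I would phrase this via the change of variables relating $\alpha$ to the horizontal variable $x$: if $x(\alpha)=\Re Z(\alpha)$ then $x_\alpha=1+\Re W_\alpha$ and $\eta(x(\alpha)) = \Im Z(\alpha)$, so $(1+\Re W_\alpha)$ and $\eta_x$ are linked by $\Im W_\alpha = \eta_x(x)\cdot(1+\Re W_\alpha)$, i.e. $W_\alpha(1+\overline{\text{something}})$... the clean way is: $\frac{dZ}{d\alpha}$ has argument equal to the slope angle of $\Gamma$, whose tangent is $\eta_x\in L^\infty$, hence $|\arg Z_\alpha|\le \pi/2-c<\pi/2$ wherever $Z_\alpha\ne 0$, and $Z_\alpha\ne0$ everywhere by the univalence of the conformal map; combined with $Z_\alpha\to1$ this yields the lower bound $\delta$.

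Once \eqref{bi-lip} is in hand, the $\dot B^{1/2}_{2,1}$ regularity of $W_\alpha$ follows from that of $\eta_x$ by a change-of-variables/composition estimate: $\dot B^{1/2}_{2,1}$ in one dimension is an algebra (and embeds in $L^\infty$), it is stable under composition with bilipschitz changes of variables whose derivative lies in $\dot B^{1/2}_{2,1}$ and is bounded below, and the map $\alpha\mapsto x(\alpha)$ is exactly such a change of variable since $x_\alpha-1=\Re W_\alpha$. One sets up a fixed point / implicit function argument in $\dot B^{1/2}_{2,1}$ for the unknown $W_\alpha$ in the equation $\Im W_\alpha = H(\Re W_\alpha)$ coupled with $\Im W_\alpha = \eta_x(\alpha + \text{(antiderivative of }\Re W_\alpha))\cdot(1+\Re W_\alpha)$, using smallness at high frequencies (not smallness of the whole norm) plus the low-frequency bilipschitz control; the Hilbert transform $H$ is bounded on each $\dot H^{1/2}$ dyadic piece with constant $1$, so it preserves $\dot B^{1/2}_{2,1}$, and the nonlinear terms are handled by the algebra property. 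This is the standard "paradifferential + algebra" argument for constructing the conformal map at critical Besov regularity.

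For part b, the relation is the identity between the two descriptions of the velocity: on $\Gamma(t)$ the complex velocity is $\bar u = \overline{\nabla\phi} = \dfrac{Q_\alpha}{Z_\alpha} = \dfrac{Q_\alpha}{1+W_\alpha}=R$. So $R$ is literally the velocity trace $\nabla\phi|_{y=\eta}$ read in the holomorphic parametrization, i.e. $R(\alpha) = (\overline{\nabla\phi})(x(\alpha),\eta(x(\alpha)))$. Thus $R$ is the composition of the Eulerian trace with the bilipschitz change of variables $\alpha\mapsto x(\alpha)$ constructed in part a, multiplied by nothing — so $L^2$ maps to $L^2$ immediately (bilipschitz changes of variables preserve $L^2$ with constants controlled by $\delta$ and $\|W_\alpha\|_{L^\infty}$). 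For the $\dot H^1$ statement one differentiates: $R_\alpha = (\partial\text{-trace})\cdot x_\alpha = (\partial\text{-trace})(1+W_\alpha)$, and since $1+W_\alpha\in\dot B^{1/2}_{2,1}\cap L^\infty$ is a multiplier on $\dot H^0=L^2$, and composition with the bilipschitz map together with $\dot H^1$ data of the Eulerian trace gives the derivative in $L^2$, one concludes $R\in\dot H^1$. Here one must be a little careful that "$\dot H^1$ regularity of $\nabla\phi|_{y=\eta}$" is preserved under the change of variables — this again uses that $x_\alpha-1\in\dot B^{1/2}_{2,1}$, which is exactly the gain from part a, so that the composition estimate in $\dot H^1$ closes.

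The main obstacle, and the step deserving the most care, is part a — specifically, proving \eqref{bi-lip} and the $\dot B^{1/2}_{2,1}$ bound simultaneously without losing the critical regularity. The subtlety is that $\dot H^{1/2}$ of $\eta_x$ alone is genuinely insufficient (as the authors note, it does not even give a Lipschitz surface), so one must exploit the $\ell^1$ summation in $\dot B^{1/2}_{2,1}$ at every step: it is what makes the relevant function spaces algebras and composition-stable, and it is what lets a fixed-point argument converge using only high-frequency smallness rather than global smallness. The lower bound $\delta$ is not quantitative in terms of the norm alone — it depends on the full profile — so it has to be extracted from a qualitative non-degeneracy (univalence of the conformal map, connectedness of the range of $1+\Re W_\alpha$) before the quantitative Besov estimates can be bootstrapped. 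Everything downstream (part b, and indeed the rest of the paper) then goes through by multiplier and composition estimates that are routine once this critical-regularity conformal parametrization is set up.
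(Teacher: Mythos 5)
Your part (b) and your ``Step 2'' transfer of Besov regularity (bi-Lipschitz change of variables, algebra/Moser estimates in $\dot B^{\frac12}_{2,1}$, and the Hilbert transform relation between $\Re$ and $\Im$ of the holomorphic quantity $\frac{W_\alpha}{1+W_\alpha}$) are essentially what the paper does, granted the bound \eqref{bi-lip}. The genuine gap is in your proof of \eqref{bi-lip} itself. You argue: $Z_\alpha\neq 0$ ``by univalence,'' $\arg Z_\alpha$ is bounded away from $\pm\pi/2$ because $\eta_x\in L^\infty$, $1+\Re W_\alpha$ is continuous and tends to $1$ at infinity, hence it has a positive infimum. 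Two steps here fail. First, univalence gives $Z'\neq 0$ only in the \emph{open} half-plane; on the boundary the derivative of a conformal map onto a Lipschitz (or worse) domain can degenerate, and controlling it is precisely the content of the lemma. For a general Lipschitz graph one only gets $\log|Z'|\in \mathrm{BMO}$-type control, not pointwise bounds above and below; the slope bound controls $\arg Z_\alpha$ but says nothing about $|Z_\alpha|$, which is the conformal density and can be small on the boundary. Second, the continuity of $1+\Re W_\alpha$ and its limit at infinity are consequences of $W_\alpha\in \dot B^{\frac12}_{2,1}\subset C_0+\text{const}$, which is what you are trying to prove -- the argument is circular as stated. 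Your subsequent fixed-point scheme for the Besov bound explicitly presupposes ``low-frequency bilipschitz control,'' so the whole construction rests on the unproved \eqref{bi-lip}.

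The paper's actual route supplies the missing quantitative input. It reduces \eqref{bi-lip} to the two-sided bound $|\beta(x,y)|\approx|y-\eta(x)|$ for the harmonic defining function $\beta$ of the domain (since $1+\Re W_\alpha = \partial_y\beta/|\nabla\beta|^2$ and $|\beta_x|\lesssim|\beta_y|$ on the boundary), and then proves that bound by a barrier argument: the small-data result of \cite{AIT} (Proposition 3.3) gives the conclusion when $\|\eta_x\|_{\dot B^{1/2}_{2,1}}$ is small, and the general case follows by comparing $\beta$, via the maximum principle, first with the solution for the low-frequency regularization $\eta_{\leq\lambda}$ (which is small in the Besov norm, handling distances $\gtrsim 1$ from $\Gamma$) and then, near each boundary point after a rotation, with the solution for a small localized parabolic comparison profile. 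This maximum-principle-plus-small-data mechanism is the heart of part (a) and is absent from your proposal; without it, or some substitute of equal strength, the lower bound $\delta$ cannot be extracted from qualitative nondegeneracy alone.
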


\begin{proof}

To obtain the conformal map $Z$ and thus $W_{\alpha}$
 it suffices to construct the harmonic function $\beta$ inside 
the fluid domain $\Omega(t)$. 
Then $\alpha$ is the harmonic conjugate of $\beta$, normalized so that 
it vanishes on the top. 
Here $\beta$ is a negative function, which solves the Dirichlet problem
\begin{equation}
\label{laplace}
\left\{
\begin{aligned}
& -\Delta \beta = 0 \qquad \text{ in } \Omega(t)
\\
& \ \ \ \beta = 0 \qquad \quad \, \text{  in } \Gamma(t)
\\
& \lim_{(x,y) \to \infty } \frac{\beta - y}{|x|+|y|} = 0 .
\end{aligned}
\right.
\end{equation}

\bigskip
{\bf STEP 1.} We first prove the uniform bound \eqref{bi-lip}. 
A direct computation using the inverse function theorem shows that
\[
1+ \Re W_{\alpha} =  \frac{ \partial_y \beta}{|\nabla_{(x,y)} \beta|^2},
\]
where due to the homogeneous boundary condition we have
\[
|\beta_x| \lesssim |\beta_y|.
\]
Thus we need to show that $\vert \beta_y \vert \lesssim 1$ on the top,
or equivalently that $|\nabla \beta| \lesssim 1$, or equivalently that
 \begin{equation} \label{beta-size}
\vert \beta(x,y) \vert \approx |y - \eta|.
\end{equation}

Our starting point for this is the authors' result in \cite{AIT},
Proposition 3.3, which asserts that if $\eta_x$ is small in $\dot
B^{\frac12}_{2,1}$ then $\W$ is small in the same space. This
further implies that $\W$ has a small uniform bound, and thus
$|1+\W\approx 1$. For $\beta$ this shows that
\eqref{beta-size} holds.

For our proof here we will combine this small data result with the
maximum principle in two steps. The maximum principle shows that if
$\eta_1, \eta_2$ are two elevation functions so that $\eta_1 \leq
\eta_2$ then for the corresponding holomorphic coordinates $\beta_1$,
$\beta_2$ we have $\vert \beta_1 \vert \leq \vert \beta_2 \vert$.

We first regularize $\eta$ at a small frequency scale
$\lambda \ll 1$, namely $\eta_{\leq \lambda}$. If $\lambda$ is small
enough then $\eta_{\leq \lambda}$ is small in $\dot B^{\frac12}_{2,1}$, so
the result in \cite{AIT} applies. Thus, the corresponding function
$\beta_\lambda$ satisfies
\[
\beta_\lambda (x,y) \approx |y - \eta_{\leq \lambda}|.
\]
Here $\beta_{\lambda}$ is the harmonic function solving \eqref{laplace}, where the defining function for $\Gamma$ is $\eta_{\leq \lambda}$. 

On the other hand, by Bernstein's 
inequality, we have 
\[
| \eta - \eta_{\leq \lambda}| \lesssim 1.
\]
Comparing $\eta < \eta_{\leq \lambda} + C$, by the maximum principle it follows that 
\[
\vert \beta(x,y) \leq \vert \beta_\lambda(x,y-C) + C \vert ,
\]
and thus
\[
\vert \beta(x,y)\vert  \leq |y - \eta(x)| + 2C .
\]
This provides the desired bound \eqref{beta-size} at distance $\gtrsim 1$ 
from $\Gamma$, 
and localizes the problem to the 
unit spatial scale.

For the next step of the proof of \eqref{beta-size}, we fix $x_0 \in \R$ and seek to
estimate $\beta$ near $x_0$. After a rotation (which preserves the
$B^{\frac12}_{2,1}$ regularity), we can assume without any restriction
in generality that $\eta'(x_0) = 0$. Then, by localizing $\eta$ near $x_0$, we
can find a function $\eta_1$ which is small in $B^{\frac12}_{2,1}$ so that 
\[
\eta_1(x) = \eta(x_0) + (x-x_0)^2, \qquad |x-x_0| \ll 1.
\]
Then comparing the corresponding  functions $\beta$ and $\beta_1$ in a 
set 
\[
\{ |x-x_0| \leq \epsilon, \quad  \eta(x) -\epsilon \leq y \leq \eta(x) \} ,
\]
by the maximum principle it follows that in this set we have
\[
\vert \beta \vert \leq C \vert \beta_1 \vert .
\]
In particular at $x = x_0$ we have 
\[
\vert \beta(x_0,y) \vert  \leq  \vert \beta_1(x_0,y) \vert \lesssim \vert \eta_1(x_0)-y \vert \lesssim \vert\eta(x_0)-y\vert ,
\]
as needed.

\bigskip

{\bf STEP 2.} Here we transfer the Sobolev regularity from $\eta_x$ to
$W_{\alpha}$, and from $\nabla \phi |_{y=\eta}$ to $R$.  
On the top we have the relations 
\[
\Im Z(\alpha) = \eta(x), \qquad R(\alpha) = \phi_x + i \phi_y,
\]
where the relation between the two parametrizations is given by 
\[
x = \Re Z (\alpha).
\]
By \eqref{bi-lip} this is a bi-Lipschitz map, so we have the 
straightforward Sobolev norm equivalence
\[
\| R \|_{L^2_\alpha} \approx \| \nabla \phi |_{y=\eta}\|_{L^2_x}, \qquad \| R \|_{\dot H^1_\alpha} 
\approx \| \nabla \phi |_{y=\eta}\|_{\dot H^1_x.}
\]
Interpolating between $L^2$ and $\dot H^1$, we also obtain 
the Besov norm equivalence
\[
\|\eta_x \|_{\dot B^{\frac12}_{2,1,x}} \approx \|\eta_x \|_{\dot B^{\frac12}_{2,1,\alpha}} 
\]
We still need to transfer the last bound to $W_{\alpha}$. For that we compute
\[
\eta_x = \frac{\Im Z_\alpha}{\Re Z_\alpha} .
\]
This implies that
\begin{equation}
\frac{\eta_x}{1+\eta_x^2} = - \Im \left( \frac{1}{Z_\alpha} \right) = 
\Im \left( \frac{W_{\alpha}}{1+W_{\alpha}}\right) 
\end{equation}
Now we use the algebra property of the Besov space $\dot B^{\frac12}_{2,1}$,
as well as the fact
 that standard Moser estimates hold for this space. For later reference we state 
the result in the following 
\begin{lemma}
a) The space $\dot B^{\frac12}_{2,1}$ is an algebra.

b) Let $W \in \dot B^{\frac12}_{2,1}$, and $G$ a smooth function with $G(0)=0$.
Then we have the Moser estimate
\begin{equation}
\| G(W)\|_{ \dot B^{\frac12}_{2,1}} \lesssim C(\|W\|_{L^\infty}) \| W\|_{ \dot B^{\frac12}_{2,1}}.
\end{equation}
\end{lemma}
This is a standard result and the proof is omitted. Such a property is
proved for instance in $\dot H^\frac12 \cap L^\infty$ in \cite{HIT};
the Besov case is completely similar.

This property  implies that
\[
\left\| \frac{\eta_x}{1+\eta_x^2} \right\|_{\dot B^{\frac12}_{2,1,\alpha}} \lesssim \| \eta_x \|_{\dot B^{\frac12}_{2,1,\alpha}}
C(\| \eta_x \|_{L^\infty})
\]
Thus we obtain 
\[
\Im \left( \frac{W_{\alpha}}{1+W_{\alpha}}\right) \in  \dot B^{\frac12}_{2,1,\alpha}
\]
But the function $\dfrac{W_{\alpha}}{1+W_{\alpha}}$ is bounded holomorphic in the lower half-plane 
with decay at infinity, so we have the usual relation between its real and imaginary part,
\[
\Re \frac{W_{\alpha}}{1+W_{\alpha}} = H \Im \frac{W_{\alpha}}{1+W_{\alpha}}
\]
Thus we obtain
\[
 \frac{W_{\alpha}}{1+W_{\alpha}} \in  \dot B^{\frac12}_{2,1,\alpha}
\]
Here we know that $1+ W_{\alpha}$ is bounded away from zero and thus $W_{\alpha}$ is bounded. 
This, again by the Moser property, yields 
\[
W_{\alpha} \in  \dot B^{\frac12}_{2,1,\alpha}
\]
as needed.

\end{proof}

\subsection{ Solitary waves and the Babenko equations}

Solitons are solutions for \eqref{ww2d} of the form
$(Q(\alpha-ct),W(\alpha-ct))$. Here the sign of $c$ is not important
due to the time reversal symmetry $(W(t,\alpha), Q(t,\alpha)) \to
(W(-t,\alpha),-Q(-t,\alpha))$. Substituting in the equations
\eqref{ngst} we obtain the system
\begin{equation}
\label{soliton}
\left\{
\begin{aligned}
& -cW_{\alpha} + F (1+W_\alpha) = 0\\
&- cQ_\alpha + F Q_\alpha  - ig W + P\left[ \frac{|Q_\alpha|^2}{J}\right] -\sigma P\left[ \frac{-i}{2+W_{\alpha}+\bar{W}_{\alpha}}\frac{d}{d\alpha}\left( \frac{W_{\alpha}-\bar{W}_{\alpha}}{\vert 1+W_{\alpha}\vert }\right)\right]  = 0. \\
\end{aligned} 
\right.
\end{equation}

\bigskip

Before pursuing this venue, we make a brief parenthesis to review the
Hamiltonian formalism and the corresponding more classical derivation
of the Babenko equations. While we will not rely on this in the present paper,
this is nevertheless an instructive exercise.   We recall that the
Hamiltonian and the horizontal momentum are given by
\begin{equation*}
\mathcal{H}(W, Q)=\int \Im (Q\bar{Q}_{\alpha})
+2\sigma \left( J^{\frac{1}{2}} - 1 -\Re W_\alpha\right)  + g \left(  |W|^2 - 
\frac12 (\bar W^2 W_\alpha + W^2 \bar W_\alpha)\right)  d\alpha,
\end{equation*}
respectively 
\begin{equation*}
\mathcal{M}(W,Q)=i\int  \bar{Q}W_{\alpha}-Q\bar{W}_{\alpha} \, d\alpha .
\end{equation*}
Since $\mathcal{M}$ is the generator of the group of horizontal translations with respect to the 
same symplectic form, it is natural to expect that solitary waves must formally solve the system
\begin{equation}\label{nother}
D\mathcal H(W,Q) = c D \mathcal M(W,Q).
\end{equation}
This can be interpreted as saying that solitary waves are critical points for the Hamiltonian on level sets 
of the momentum.  In this interpretation the velocity $c$ plays the role of the Lagrange multiplier.

We now use  the above relation to formally derive the Babenko's equations.
We first compute some simple variational derivatives.
\[
\frac{d \mathcal H}{d \bar Q} =  i Q_\alpha,  \qquad \frac{d\mathcal{M}}{d \bar Q} =i W_\alpha .
\]
Then from the first component of \eqref{nother} we obtain 
\begin{equation}\label{n1}
Q_\alpha = c W_\alpha.
\end{equation}

Next we have 
\[
\frac{d \mathcal M}{d \bar W} = iQ_\alpha.
\]
Thus we are left with a single equation for $W$, namely 
\[
\frac{d \mathcal H}{d \bar W} = ic^2 W_\alpha.
\]

We have 
\[
\frac{d \mathcal H}{d \bar W}  = -\sigma \partial_\alpha \left(\frac{1+W_\alpha}{|1+W_\alpha|}\right)  + g ( 
W - \bar W W_\alpha + W W_\alpha).
\]
This leads us to the following formulation of the Babenko equations:
\begin{equation}\label{n1a}
P\left[-\sigma \partial_\alpha \left(\frac{1+W_\alpha}{|1+W_\alpha|}\right)  + g ( 
W - \bar W W_\alpha + W W_\alpha)\right] = ic^2 W_\alpha.
\end{equation}
We remark that this is not exactly the standard formulation, which is done at the level 
of the function $\Im W$ which represents the elevation in the conformal parametrization.
In this article we prefer instead to work with the holomorphic variable $W_\alpha$.
Critically this allows us to  take advantage of the algebra structure for this class of functions.

We specialize the above equations  to gravity, respectively capillary waves.

\bigskip

(i) For gravity waves, the Babenko equations have the form
\begin{equation}\label{babenko-g}
gW - gP[\bar W W_\alpha - W W_\alpha] = i c^2 W_\alpha
\end{equation}
These admit a variational interpretation, namely 
as critical points for the reduced Hamiltonian (i.e. potential energy)
 \begin{equation*}
\mathcal{H}_0(W)=\int  g \left(  |W|^2 - \frac12 (\bar W^2 W_\alpha + W^2 \bar W_\alpha)\right)  d\alpha,
\end{equation*}
on level sets of the reduced  momentum
\begin{equation*}
\mathcal{M}_0(W)=i\int  \bar{W}W_{\alpha}-W\bar{W}_{\alpha} \, d\alpha .
\end{equation*}
\medskip

(ii) For capillary waves, the Babenko equations are
\begin{equation}\label{babenko-c}
-  P \partial_\alpha \left(\frac{1+W_\alpha}{|1+W_\alpha|}\right)= i c^2 W_\alpha
\end{equation}
As above, these can be seen as the equations for  critical points for the reduced Hamiltonian (i.e. potential energy)
\begin{equation*}
\mathcal{H}_0(W)=\int 2\sigma \left( J^{\frac{1}{2}} - 1 -\Re W_\alpha\right)    d\alpha,
\end{equation*}
on level sets of the reduced  momentum $\mathcal{M}_0$.

\bigskip
Rigorously deriving these equations in the low regularity setting via
the Hamiltonian formalism requires inverting the symplectic map,
which is not so simple in the holomorphic coordinate system. Also, we are not assuming enough 
low frequency regularity in order to guarantee that either the Hamiltonian or the momentum are finite.
Because of this we will derive these equations directly from
\eqref{soliton}. We begin with the first equation in \eqref{soliton},
which we rewrite as
\[
F = P\left[ \frac{Q_{\alpha} -\bar{Q}_{\alpha}}{J} \right]=\frac{cW_{\alpha}}{1+W_{\alpha}}.
\]
The expression inside the projection is imaginary, thus by taking the imaginary part of both sides, leads to the following equality
\[
\frac{1}{2}\frac{Q_{\alpha}-\bar{Q}_{\alpha}}{J}=\frac{1}{2}\frac{c\left(W_{\alpha}-\bar{W}_{\alpha} \right)}{J}.
\]
Hence,  $\Im Q_\alpha = c\Im W_\alpha$; therefore
\begin{equation}
\label{1}
Q_{\alpha} =cW_{\alpha}.
\end{equation}

To obtain the counterpart of the second Babenko equation \eqref{n1} we start from the
second equation in \eqref{soliton}, we substitute the expression of
$F$ from the first equation, use \eqref{1}, and equate the real part
of both sides of the resulting equation. This leads to the following relation
\[
- \frac{c^2}{2}(W_{\alpha}+\bar{W}_{\alpha} )+ \frac{c^2}{2}  \left( \frac{W_{\alpha}^2}{1+W_{\alpha}} + \frac{\bar{W}_{\alpha}^2}{1+\bar{W}_{\alpha}}  \right) +\frac{c^2}{2} 
 \frac{\vert W_{\alpha}\vert ^2}{J} -ig\frac{W-\bar{W}}{2}+ \frac{i\sigma}{1+W_{\alpha}}\partial_{\alpha}\left(\frac{1+W_{\alpha}}{\vert 1+W_{\alpha}\vert }\right)  = 0.
\] 
Further computations give
\[
-ig\frac{W-\bar{W}}{2}+\frac{i\sigma}{1+W_{\alpha}} \partial_{\alpha}\left(\frac{1+W_{\alpha}}{\vert 1+W_{\alpha}\vert }\right)  =  \frac{c^2}{2}\frac{ W_{\alpha} +\bar{W}_{\alpha} +W_{\alpha} \bar{W}_{\alpha}}{J} .
\]
We now specialize to gravity, respectively capillary waves.

\bigskip

(i) For gravity waves we have
\begin{equation}
\label{1g}
-ig(W-\bar{W})  =  c^2\frac{ W_{\alpha} +\bar{W}_{\alpha} +W_{\alpha} \bar{W}_{\alpha}}{\vert 1+W_{\alpha}\vert^2} .
\end{equation}
We multiply \eqref{1g} by $(1+W_{\alpha})$ to obtain
\begin{equation}
\label{1gg}
-ig(W-\bar{W})(1+W_{\alpha})  =  c^2\left[ W_{\alpha} +\frac{\bar{W}_{\alpha}}{1+\bar{W}_{\alpha}}\right].
\end{equation}
The right hand side  of \eqref{1gg} can be written as a holomorphic plus an antiholomorphic part, but the left hand side cannot be decomposed in a similar fashion. We therefore need to use the projection $P$ and multiply by $i$ to arrive at \eqref{babenko-g}.

\bigskip

(ii) For capillary water waves we have a self-contained equation for $\W=W_{\alpha}$,
\begin{equation}
\label{1c}
i\frac{\sigma}{1+\W}  \partial_{\alpha}\left(\frac{1+\W}{\vert 1+\W\vert }\right)  =  \frac{c^2}{2}\frac{ \W +\bar{\W} +\W \bar{\W}}{J}.
\end{equation}
As before, we multiply \eqref{1c} by  $(1+\W)$ to obtain
\begin{equation}
\label{1cc}
i\sigma  \partial_{\alpha}\left(\frac{1+\W}{\vert 1+\W\vert }\right)  =  c^2\left[ \W +\frac{\bar{\W}}{1+\bar{\W}}\right],
\end{equation}
and take the projection of the resulting equation to arrive at \eqref{babenko-c}.

In both cases, we remark that the above equations provide additional\footnote{Of course, the two different sets of equations
are ultimately equivalent, but proving this requires extra work.} information compared to the Babenko type equations
\eqref{babenko-g} and \eqref{babenko-c}. We will take advantage of this in the sequel.

\section{ Gravity waves}
\label{s:g}
Here we prove the desired nonexistence statement for solitary waves in
the case of the gravity waves, expressed in holomorphic
coordinates. Our starting point is the equation~\eqref{1g} for
$W$, which we recall here:
\begin{equation}\label{1g-re}
2 g \Im W = c^2 \frac{W_{\alpha} +\bar W_{\alpha} +|W_{\alpha}|^2}{|1+W_{\alpha}|^2}.
\end{equation}
For this equation we consider solutions $W$ whose derivative $W_{\alpha}$  has regularity as follows:
\begin{equation} \label{W-reg}
\begin{split}
W_{\alpha} \in \dot B^{\frac12}_{2,1}, \qquad |1+W_{\alpha}| \geq \delta > 0.
\end{split}
\end{equation}
Then our main nonexistence result in holomorphic coordinates is

\begin{theorem}\label{t:g-hol}
The above equation has no nontrivial holomorphic solutions $W$ which have regularity \eqref{W-reg}.
\end{theorem}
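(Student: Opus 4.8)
The plan is to exploit the extra structure in equation \eqref{1g-re}, which expresses the \emph{real} quantity $2g\Im W$ as a specific rational combination of $W_\alpha$ and $\bar W_\alpha$, and combine it with the holomorphicity of $W$. First I would rewrite the right-hand side of \eqref{1g-re} in a more symmetric form: noting that
\[
\frac{W_\alpha + \bar W_\alpha + |W_\alpha|^2}{|1+W_\alpha|^2} = 1 - \frac{1}{|1+W_\alpha|^2} = 1 - \frac{1}{(1+W_\alpha)(1+\bar W_\alpha)},
\]
the equation becomes, after setting $V := 1+W_\alpha$ (so $V$ is holomorphic, $V-1$ decays, and $|V|\ge\delta$, $\Re V$ bounded above), something like $2g\Im W = c^2(1 - |V|^{-2})$. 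Differentiating in $\alpha$ turns the left side into $2g\,\Im V$ (using $W_\alpha = V-1$), giving a \emph{pointwise} identity $2g\,\Im V = c^2\,\partial_\alpha(|V|^{-2})$ on $\R$. The right side is the boundary trace of the derivative of a function that one wants to recognize as harmonic or the real part of something holomorphic; since $|V|^{-2} = (V\bar V)^{-1}$ and $1/V$ is bounded holomorphic (by $|V|\ge\delta$ and $V\to1$), we have $|V|^{-2} = |1/V|^2$, and $\partial_\alpha |1/V|^2 = 2\Re(\overline{(1/V)}\,\partial_\alpha(1/V))$. This should let me write the equation as an identity balancing the boundary trace of a holomorphic function ($2g\Im V = -ig(V-\bar V)$, whose holomorphic part is $-igV$) against a product of holomorphic and antiholomorphic traces, which is exactly the kind of relation the projection $P$ is designed to decouple.

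Next I would apply $P$ (projection onto nonpositive frequencies) to the resulting identity. On the holomorphic side $P$ acts as (essentially) the identity on $V-1$; on the quadratic/rational terms involving both $1/V$ and $\overline{1/V}$, $P$ extracts a specific nonlinear expression. Writing $u := 1/V - 1$ (holomorphic, decaying, small in $L^\infty$ since $|V|\ge\delta$ and Besov-regular by the Moser lemma), the equation should reduce to a closed equation of Babenko type for $u$ of the schematic form $g\,u = c^2\,P[\,\text{bilinear in } u, \bar u\,]$, but now — crucially — carrying \emph{sign information} inherited from the positivity of $g$ and $c^2$, which the raw Babenko equation \eqref{babenko-g} loses. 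The key is that $g\Im W$ appearing on the left of \eqref{1g-re} is not independently prescribed: it is determined by $W_\alpha$, and this overdetermination is what kills nontrivial solutions. I would then pair the equation with $\bar u$ (or with $u$ itself) and integrate over $\R$, using the Plancherel/Fourier-support properties of $P$: $\int \bar f\, P g\,d\alpha = \int \bar f\, g\,d\alpha$ when $f$ is holomorphic. This should produce an energy identity whose left side is a positive multiple of $\|u\|_{L^2}^2$ (or a homogeneous Sobolev norm) and whose right side is a cubic/quartic term that, after integration by parts exploiting that all factors are boundary traces of holomorphic functions, either vanishes by a Fourier-support (frequency) obstruction or is controlled by a small constant times the left side — forcing $u\equiv0$, hence $W_\alpha\equiv0$.

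The main obstacle will be the last step: making the energy/virial identity genuinely \emph{sign-definite} or \emph{frequency-forbidden} despite the nonlocality of $P$. The cubic term $\int \bar u\, P[\text{bilinear}]$ need not vanish by naive symmetrization, and the authors themselves flag that their argument is ``vaguely reminiscent of proofs of the absence of embedded resonances for elliptic operators, with an added twist which is due to the nonlocality.'' So I anticipate that a single integration against $\bar u$ will not suffice; instead I expect to need a weighted multiplier — something like pairing with $\alpha u_\alpha$ or with $x$ times a suitable quantity (a Pohozaev/virial-type identity adapted to the scaling symmetry mentioned in the introduction), together with the holomorphic extension into $\Ha$ to rewrite boundary integrals as integrals over the half-plane where a Rellich-type commutator identity gives a definite sign. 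Reconciling this virial argument with only critical ($\dot B^{1/2}_{2,1}$) regularity — enough to make all the integrations by parts legitimate and all the boundary terms at spatial infinity vanish — will be the delicate technical core, and is presumably why the a-priori regularity was chosen in the Besov scale rather than merely $\dot H^{1/2}$. Once the identity is established and shown to be coercive, the conclusion $W_\alpha\equiv0$, and hence (by the decay normalization) $W\equiv0$, $Q\equiv0$, is immediate.
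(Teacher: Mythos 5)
Your algebraic preparation is sound and partly parallels the paper: the identity
\[
\frac{W_\alpha+\bar W_\alpha+|W_\alpha|^2}{|1+W_\alpha|^2}=1-\frac{1}{|1+W_\alpha|^2}
\]
is correct, and multiplying \eqref{1g-re} by $(1+W_\alpha)$ and projecting to reach a Babenko-type equation is exactly the paper's second step. But the heart of your plan — pair the projected equation with $\bar u$, hope for a sign-definite energy identity, and if that fails fall back on an unspecified Pohozaev/virial/Rellich identity — is where the argument has a genuine gap, and you essentially concede this yourself. Pairing with $\bar u$ and using $\int \bar f\,Pg=\int\bar f\,g$ for holomorphic $f$ just returns you to the cubic Babenko identity with no sign information; there is no coercivity to be had there.

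The two ideas actually needed are absent from the proposal. First, the paper writes the projected equation as $gW=P[\,iVW_\alpha\,]$ with $V=c^2+ig(W-\bar W)$ \emph{real} and bounded, and tests with $\chi_r\bar W_\alpha$ where $\chi_r=\chi(\alpha/r)$ is a bounded \emph{monotone} cutoff. The point is not positivity of anything: because $V$ is real, $\Re\bigl(\chi_r\,\overline{W_\alpha}\cdot iVW_\alpha\bigr)=0$ pointwise, so the main term on the right vanishes identically, while the left side integrates by parts to $-\tfrac12\int\chi_r'|W|^2$. The entire right-hand side is then the commutator error $\Re\int\overline{[P,\chi_r]W_\alpha}\cdot iVW_\alpha$. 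Second — and this is the "added twist due to nonlocality" you quote — the Coifman--Meyer bound gives $\|[P,\chi_r]W_\alpha\|_{L^2}\lesssim r^{-1}\|W\|_{L^2}$, which is of exactly the same order $r^{-1}$ as the main term $\int\chi_r'|W|^2$; the argument closes only via the refinement (Lemma~\ref{l:com}) that the commutator is in fact $o(r^{-1})$ as $r\to\infty$, because only frequencies $\lesssim r^{-1}$ of $W$ contribute to it. Without identifying this $o(r^{-1})$ gain, the two terms merely balance and no conclusion follows. You also skip the preliminary bootstrap (using the equation itself plus Moser estimates and interpolation) that upgrades the critical assumption $W_\alpha\in\dot B^{1/2}_{2,1}$ to $W\in L^2$, $W_\alpha\in L^2$, which is what legitimizes all the integrations; your closing remark about "reconciling with critical regularity" points at this issue but does not resolve it.
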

\begin{proof}
  We first directly dispense with the case $c=0$. For $c \neq 0$, 
on the other hand, we can improve the low frequency regularity of $W$.
 We begin with \eqref{W-reg} and the Moser estimates in $\dot
B^{\frac12}_{2,1}$ to conclude that $\Im W \in \dot
B^{\frac12}_{2,1}$, and thus $W \in \dot B^{\frac12}_{2,1} \subset
L^\infty$ (after possibly subtracting an appropriate constant).
Interpolating this with \eqref{W-reg} we obtain $W_{\alpha} \in L^2$. Now we
reiterate the same steps to succesively conclude that $\Im W \in L^2$,
then $W \in L^2$.  Since $W_\alpha \in L^2$, we also conclude that $W$
has limit zero at infinity.

Multiplying the equation \eqref{1g-re} by $(1+W_{\alpha})$ gives
\[
2g (1+ W_{\alpha}) \Im W =   c^2 \left( W_{\alpha} + \frac{\bar W_{\alpha}}{1+\bar W_{\alpha}}\right),
\]
which after projection yields the Babenko's equation
\[
-i g W + 2 g P[ \Im W  W_\alpha ] =   c^2 W_\alpha . 
\]
We rewrite this equation  in the form
\begin{equation}\label{bab-g}
g W = P[ i V W_\alpha], \qquad  V = c^2 +i g ( W -\bar{ W}).
\end{equation}
Then it remains to show that this problem admits no $H^1$ solutions.

The key remark is that the coefficient $V$ of $W_\alpha$ is purely real, and also bounded.
If there were no projection, we could choose a bounded increasing
function $\chi$, multiply the equation by $2 \chi \bar W_\alpha$, take the real part and integrate by parts to get
\[
0 = 2\Re \int \chi \bar W_\alpha W \, d\alpha  = - \int \chi' |W|^2\, d\alpha, 
\]
which yields $W=0$. In our case, we need to be more careful. We set $\chi$ to be a smooth
bump function which increases from $0$ to $1$, and for $r > 0$ we define 
\[
\chi_r := \chi(\alpha/r).
\]
Then, multiplying as above by $\chi \bar W_\alpha$ and integrating by parts on the left,  we have the following equality
\[
g\int \chi_r' |W|^2 \, d\alpha = -2 \Re  \int \chi_r \bar W_\alpha P( i V W_\alpha)\, d \alpha
= -2\Re \int \overline{[P,\chi_r] W_\alpha}  \cdot (i V W_\alpha) \, d\alpha ,
\]
where we have used the fact that $P$ is $L^2$ self-adjoint.  We rewrite this in the form
\[
g\int \chi'(\alpha/r)' |W|^2 \, d\alpha = -2 r \Re \int \overline{[P,\chi_r] W_\alpha}  
\cdot (i V W_\alpha) \, d\alpha.
\]

Here we have two favourable features. First, we have a commutator which yields the $r^{-1}$ gain
via the classical Coifman-Meyer estimate  \cite{cm},
\begin{equation}\label{CM}
\| [P,\chi_r] W_\alpha\|_{L^2} \lesssim \| \chi'_r \|_{L^\infty} \| W\|_{L^2} \lesssim r^{-1} \|W\|_{L^2}.
\end{equation}
This is almost enough but not quite. Secondly only the frequencies in
$W$ which are less than $r^{-1}$ affect the commutator.  Because of
this we claim that that the bound \eqref{CM} admits a qualitative improvement in the limit 
as $r \to \infty$:
\begin{lemma}\label{l:com}
Let $W \in L^2$. Then
\[
\lim_{r \to \infty} r \| [P,\chi_r] W_\alpha\|_{L^2} = 0.
\]
\end{lemma}
To conclude the proof of the theorem,  we let $r \to \infty$ in the last integral relation. By Lebesque
dominated convergence theorem, we  obtain
\[
\chi'(0) \int |W|^2 \ d\alpha \leq \lim_{r \to \infty} \int r \chi_r' |W|^2 \ d\alpha = 0,
\]
which implies $W = 0$ and indeed concludes the proof of the theorem.

\end{proof}

 It remains to prove the Lemma.

\begin{proof}[Proof of Lemma~\ref{l:com}]
  As mentioned above, the idea is to use the fact that primarily only
  the frequencies $\lesssim r^{-1}$ of $W$ contribute to the
  commutator.  Precisely, given a frequency threshold $\lambda > 0$ we
  separate  $W$ into a low and a high frequency part,
\[
W = W_{<\lambda} + W_{\geq \lambda}.
\]
For the contribution of the low frequency part we use directly \eqref{CM},
\[
\| [P,\chi_r] W_{\alpha,<\lambda} \|_{L^2}  \lesssim r^{-1} \|W_{<\lambda}\|_{L^2}.
\]
For the high frequency part we write
\[
 [P,\chi_r] W_{\geq \lambda, \alpha} =  \left[ P,(\chi_r)_{ \geq\frac{\lambda}{4} } \right] 
W_{\geq \lambda, \alpha },
\]
where we took the $\lambda /4$ truncation in $\chi_r$ to make sure
that the excluded frequencies in $\chi_r$ are strictly smaller than
the frequencies in $W$ and thus do not contribute to the commutator.
Here we have rapid decay as $r$ approaches infinity,
\[
\left|\partial_{\alpha} (\chi_r)_{\geq\frac{ \lambda}{4} }\right| \lesssim r^{-1} (\lambda r)^{-N},
\]
where $N$ is any fixed large positive integer.

Therefore using directly the Coifman-Meyer commutator estimate we obtain
\[
r \left\| \left[P,(\chi_r)_{\geq\frac{ \lambda}{4}} \right] W_{\geq \lambda, \alpha}\right\|_{L^2} \lesssim  r^{-1} (\lambda r)^{-N} \| W\|_{L^2} \to 0.
\]

Adding the low and high frequency contributions, it follows that 
\[
\limsup_{r \to \infty}  r \| [P,\chi_r] W_\alpha\|_{L^2} \lesssim \| W_{\leq \lambda}\|_{L^2}.
\]
Now we let $\lambda \to 0$ to conclude the proof. 
\end{proof}

\subsection{ Further comments on crested waves}

Here we show how our result in Theorem~\ref{t:g-hol} also precludes
the existence of crested solitary waves:

\begin{corollary}
  There are no crested wave solutions $W$ with 
\[
  \dfrac{W_{\alpha}}{1+W_{\alpha}} \in \dot B^{\frac12}_{2,1} \mbox{ and }   |1+W_{\alpha}| \geq \delta > 0, 
\]
  for the equation \eqref{1g-re} .
  
\end{corollary}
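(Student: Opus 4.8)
The plan is to reduce the crested-wave statement to the already-proven Theorem~\ref{t:g-hol} by showing that the regularity hypothesis $Y := W_\alpha/(1+W_\alpha) \in \dot B^{\frac12}_{2,1}$ together with the nondegeneracy $|1+W_\alpha| \geq \delta > 0$ \emph{implies} $W_\alpha \in \dot B^{\frac12}_{2,1}$, which is exactly the regularity \eqref{W-reg} required by the theorem. The point is that in Theorem~\ref{t:g-hol} the hypothesis is stated on $W_\alpha$ directly, whereas for ``crested'' waves one only wants to assume control of the slope variable $Y$ (which remains bounded and of bounded Besov norm even when $W_\alpha$ itself becomes large near a crest, since a crest corresponds to $1+W_\alpha$ \emph{rotating} but not vanishing — wait, in fact the crest is exactly where one allows $|1+W_\alpha|$ to degenerate). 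Let me reconsider: the corollary as stated keeps the uniform bound $|1+W_\alpha|\geq\delta>0$, so ``crested'' here refers to allowing angular crests in the sense that the conformal slope $1+W_\alpha$ need not have a one-sided limit or need not be close to $1$, only that $Y\in\dot B^{\frac12}_{2,1}$ is imposed rather than $W_\alpha\in\dot B^{\frac12}_{2,1}$. The inversion $W_\alpha = Y/(1-Y)$ is smooth as long as $|1-Y|$ is bounded away from zero, and $1 - Y = 1/(1+W_\alpha)$, so $|1-Y| = |1+W_\alpha|^{-1}$; combined with the upper bound on $|1+W_\alpha|$ coming from $Y \in L^\infty$ (as $\dot B^{\frac12}_{2,1}\subset L^\infty$), we get $|1-Y|$ bounded both above and below.

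Concretely, here are the steps I would carry out. First, from $Y \in \dot B^{\frac12}_{2,1} \subset L^\infty$ we get $\|Y\|_{L^\infty} \leq C$, hence $1+W_\alpha = 1/(1-Y)$ is bounded; together with the assumed lower bound $|1+W_\alpha|\geq\delta$ this gives $\delta \leq |1+W_\alpha| \leq C'$, equivalently $1-Y$ takes values in a compact subset of $\mathbb{C}\setminus\{0\}$. Second, apply the Moser estimate from Lemma~\ref{l:com}'s companion (the Moser lemma in $\dot B^{\frac12}_{2,1}$, stated right after \eqref{bi-lip} in Section~2) to the smooth function $G(z) = z/(1-z)$, which satisfies $G(0)=0$ and is smooth on the relevant compact range of $Y$; this yields $W_\alpha = G(Y) \in \dot B^{\frac12}_{2,1}$ with norm controlled by $C(\|Y\|_{L^\infty})\|Y\|_{\dot B^{\frac12}_{2,1}}$. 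Third, having verified that $W$ satisfies \eqref{W-reg}, invoke Theorem~\ref{t:g-hol} directly: the equation \eqref{1g-re} has no nontrivial holomorphic solution with that regularity, so $W \equiv 0$, i.e. there is no crested solitary wave.

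I would also add a short remark explaining why this genuinely covers the physically interesting crested configurations — namely that a Stokes-type angular crest of interior angle $\theta$ corresponds, in conformal coordinates, to $1+W_\alpha$ behaving like a power $(\alpha-\alpha_0)^{\gamma}$ with $\gamma$ determined by $\theta$, and one checks that $Y = W_\alpha/(1+W_\alpha) = 1 - (1+W_\alpha)^{-1}$ still lies in $\dot B^{\frac12}_{2,1}$ locally for the relevant range of angles (this is the computation promised in the introduction, excluding $\theta \in (\pi/2,\pi)$ and in particular the $2\pi/3$ Stokes angle), even though $W_\alpha$ itself may blow up. If one wants the sharper statement allowing $|1+W_\alpha|$ to actually vanish at isolated crest points, one would instead localize away from the crests, run the argument on the complement, and handle the crest points by a separate removable-singularity / limiting argument — but for the corollary as stated, with $|1+W_\alpha|\geq\delta$ retained, this is unnecessary.

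The main obstacle — really the only nontrivial point — is making sure the Moser composition step is legitimate: one must confirm that $Y$ ranges in a \emph{fixed} compact subset of the domain of smoothness of $z\mapsto z/(1-z)$, uniformly in $\alpha$, which is precisely what the two-sided bound $\delta \leq |1+W_\alpha| \leq C'$ provides, and that the homogeneous Besov Moser estimate (as opposed to an inhomogeneous one) applies here; since $G(0)=0$ and $\dot B^{\frac12}_{2,1}$ is an algebra closed under such compositions by the lemma cited in Section~2, this goes through. Everything else is bookkeeping: the reduction is essentially a change of unknown from $W_\alpha$ to its bi-Lipschitz image $Y$.
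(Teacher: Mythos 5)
There is a genuine gap, and it sits exactly at the point you flagged as ``the only nontrivial point.'' You claim that $Y \in \dot B^{\frac12}_{2,1} \subset L^\infty$ gives an \emph{upper} bound on $|1+W_\alpha| = |1-Y|^{-1}$. It does not: $\|Y\|_{L^\infty} \leq C$ only bounds $|1-Y|$ from \emph{above} (by $1+C$), hence bounds $|1+W_\alpha|$ from \emph{below}; the assumed $|1+W_\alpha|\geq\delta$ is a second lower bound of the same kind. Nothing in the hypotheses keeps $Y$ away from the singularity of $G(z)=z/(1-z)$ at $z=1$, so the Moser composition step collapses. Worse, this is not a repairable technicality: at an angular crest of interior angle $\theta\in(0,\pi)$ one has $1+W_\alpha = Z_\alpha \approx C(\alpha-\alpha_0)^{\frac{\theta}{\pi}-1} \to \infty$, hence $Y \to 1$ and $W_\alpha \notin \dot B^{\frac12}_{2,1}$ near the crest. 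This is precisely the configuration the corollary is meant to exclude, and it is why the paper explicitly declines to invoke Theorem~\ref{t:g-hol} as a black box. Your own closing remark (that $W_\alpha$ may blow up at a Stokes crest while $Y$ stays in $\dot B^{\frac12}_{2,1}$) contradicts your Step 1, so the proposal is internally inconsistent: if your reduction were valid, the corollary would say nothing beyond Theorem~\ref{t:g-hol} and would not cover crested waves at all.

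The paper's actual route is different in kind. First, \eqref{1g-re} forces $\Im W \leq h_0 = c^2/(2g)$, and wherever $\Im W < h_0$ the coefficient $V$ in the Babenko equation is strictly positive, so the equation is elliptic there and $W_\alpha$ is smooth; crests can therefore only sit at points of maximal height. Second, a local power-matching of the ansatz $Z \approx ih_0 + C(z-\alpha_0)^{\theta/\pi}$ in \eqref{1g-re} pins the admissible crest angle to $\theta = 2\pi/3$, for which $W_\alpha \sim (\alpha-\alpha_0)^{-1/3}$ is locally square integrable. Third, with $W_\alpha \in L^2$ in hand (globally, combining the crest analysis with the regularity argument away from crests), one invokes only the \emph{second half} of the proof of Theorem~\ref{t:g-hol} --- the multiplier $\chi_r \bar W_\alpha$ / commutator argument for the Babenko equation \eqref{bab-g} --- which requires only $W \in H^1$, not the Besov regularity \eqref{W-reg}. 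To fix your write-up you would need to replace the Moser reduction by this crest-angle analysis and then cite the $H^1$ nonexistence statement rather than the theorem itself.
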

Here, for simplicity, by crested waves we mean waves which are smooth
except for finitely many angular crests. Certainly, there is room to refine this further
but we choose no to pursue it here.

 We next show how to prove
this corollary.  We will not use directly the result of Theorem~\ref{t:g-hol},
as near a crest $W_{\alpha}$ cannot be expected to have the regularity \eqref{W-reg}.
Instead, we will use the second part of the proof of the theorem, which shows 
that the Babenko equation~\eqref{bab-g} has no solutions $W$ with $W_\alpha \in L^2$.

To set the stage, observe  that the equation \eqref{1g-re} implies that 
\[
\Im W \leq h_0 :=\frac{c^2}{2g}.
\]
Here $h_0$ is the maximum possible height of a solitary wave with
speed $c$.  In the the region where $\Im W < h_0$, the coefficient $V$
of $W_\alpha$ in \eqref{bab-g} is strictly positive, which makes the equation
\eqref{bab-g} elliptic.  This in turn implies that $W_\alpha \in
C^\infty$ there. Thus the only way we can have a crest is at a point
$\alpha_0$ of maximum height $h_0$.

Away from the crests, the first argument in the proof of Theorem~\ref{t:g-hol}
combined with the hypothesis of the corollary still yields the regularity $W_\alpha \in L^2$.
So we need to investigate what happens near an angular crest.

Suppose that at some  point $\alpha_0$  we have an angular crest of angle $\theta \in (0,\pi)$.
Then near $\alpha_0$ the conformal map $Z$ must have the form 
\[
Z(z) \approx i h_0 + C (z-\alpha_0)^{\frac{\theta}{\pi}},
\]
and
\[
Z_\alpha = 1+W_\alpha \approx  C (\alpha-\alpha_0)^{\frac{\theta}{\pi}-1}.
\]

Inserting such an ansatz in the equations \eqref{1g-re} the leading singular
part  on the left hand side is $(z-\alpha_0)^{\frac{\theta}{\pi}}$, whereas on the right hand side, the
similar power is $(z-\alpha_0)^{2(1-\frac{\theta}{\pi})}$.  Matching
the two, we see that the only admissible angular crest corresponds to
the angle $\theta = 2\pi/3$ (i.e. Stokes type waves). 
But  if all the crests have $2\pi/3$ angles then
the condition $W_\alpha \in L^2$ is satisfied\footnote{More generally, we have $W_\alpha \in L^2$ 
iff all crest angles are in the range $\theta \in (\frac{\pi}2,\pi)$.},  so the existence of $W$
excluded by the second part of the proof of Theorem~\ref{t:g-hol}.

\section{Capillary waves}
\label{s:c}

Here we start with the algebraically equivalent  equations
\begin{equation}\label{1cc-re}
i \sigma \partial_\alpha \frac{1+\W}{|1+\W|} = c^2 \left[ \W + \frac{\bar \W}{1+\bar \W}\right],
\end{equation}
or 
\begin{equation}\label{1cc+}
- \partial_\alpha \frac{\W - \bar \W}{|1+\W|} = ic^2 \left[  \frac{ \W(2+ \W)}{1+ \W} + \frac{\bar \W(2+\bar \W)}{1+\bar \W} \right] .
\end{equation}

\begin{theorem}\label{t:c-hol}
The above equation \eqref{1cc-re}  has no nontrivial solutions $W$ with regularity \eqref{W-reg}.
\end{theorem}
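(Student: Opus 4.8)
The plan is: (i) reduce \eqref{1cc-re} to a scalar nonlocal equation for $\rho:=\log|1+\W|$; (ii) bootstrap the regularity of $\rho$ up to $L^2$; (iii) run a cutoff/commutator argument parallel to the proof of Theorem~\ref{t:g-hol}.

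\textbf{Reduction.} The case $c=0$ is immediate: \eqref{1cc-re} forces $\partial_\alpha\tfrac{1+\W}{|1+\W|}=0$, so $\tfrac{1+\W}{|1+\W|}$ is constant, hence $\equiv1$ since $\W\to0$ at infinity (as $\W\in\dot B^{\frac12}_{2,1}\subset C_0$); thus $\W$ is real and holomorphic, so $\W=0$. For $c\neq0$, use the polar representation $1+\W=e^{\rho+i\omega}$ with $\rho=\log|1+\W|$ and $\omega=\arg(1+\W)$, the continuous branch vanishing at infinity; since $1+\W$ is holomorphic, nonvanishing in $\mathbb H$ and valued in the right half-plane on the real line (using $\Re(1+\W)\geq\delta>0$), the function $\log(1+\W)=\rho+i\omega$ is bounded holomorphic and decays at infinity, whence $\omega=-H\rho$. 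Substituting $\tfrac{1+\W}{|1+\W|}=e^{i\omega}$ and the algebraic identity $\W+\tfrac{\bar\W}{1+\bar\W}=(1+\W)-\tfrac1{1+\bar\W}=2e^{i\omega}\sinh\rho$ into \eqref{1cc-re} and cancelling the nonvanishing factor $e^{i\omega}$ turns \eqref{1cc-re} into the \emph{equivalent} real scalar equation $-\sigma\omega_\alpha=2c^2\sinh\rho$, i.e.
\[
\sigma\,|D|\rho=2c^2\sinh\rho\qquad\Longleftrightarrow\qquad \sigma\,\partial_\alpha\rho=-2c^2\,H(\sinh\rho),
\]
where $|D|:=H\partial_\alpha$ has symbol $|\xi|$ and the second form follows on applying $H$ and using $H^2=-I$. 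This passage loses no information, since it is an equivalence.

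\textbf{Bootstrap.} From $\W\in\dot B^{\frac12}_{2,1}\subset L^\infty$ the Moser estimate gives $\rho,\omega\in\dot B^{\frac12}_{2,1}$, and then the once-integrated form of \eqref{1cc-re}, namely $ic^2W=-\sigma P\big(\tfrac{1+\W}{|1+\W|}-1\big)$ (equivalently \eqref{babenko-c} integrated once), gives $W\in\dot B^{\frac12}_{2,1}$ as well. Interpolating this with $W\in\dot B^{\frac32}_{2,1}$ (which is $\W\in\dot B^{\frac12}_{2,1}$) yields $W\in\dot B^1_{2,1}$, hence $\W\in L^2$. With $\W\in L^2\cap L^\infty$ the Moser estimates now produce $\rho,\omega,\sinh\rho\in L^2$, and the scalar equation upgrades this to $|D|\rho\in L^2$, so $\rho_\alpha\in L^2$; moreover $W,\W,\rho,\omega$ all vanish at infinity. (This is the same bootstrap as in Theorem~\ref{t:g-hol}, with \eqref{babenko-c} playing the role of \eqref{1g-re}.)

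\textbf{The cutoff argument.} Take $\chi$ smooth increasing from $0$ to $1$ with $\chi'(0)>0$, set $\chi_r=\chi(\cdot/r)$, multiply $\sigma\,\partial_\alpha\rho=-2c^2H(\sinh\rho)$ by $\chi_r\sinh\rho$, and integrate. On the left, $\rho_\alpha\sinh\rho=\partial_\alpha(\cosh\rho-1)$ — the analogue of $gW\bar W_\alpha\sim\partial_\alpha|W|^2$ in Section~\ref{s:g} — and an integration by parts (boundary terms vanish since $\cosh\rho-1\to0$) gives $-\sigma\int\chi_r'(\cosh\rho-1)\,d\alpha$. On the right, since $H$ is skew-adjoint the bulk term $\int\chi_r\,g\,Hg\,d\alpha$ with $g:=\sinh\rho$ equals $-\tfrac12\int g\,[H,\chi_r]g\,d\alpha$, a pure commutator. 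Thus
\[
-\sigma\int\chi_r'(\cosh\rho-1)\,d\alpha=c^2\int g\,[H,\chi_r]g\,d\alpha .
\]
Multiply by $r$ and let $r\to\infty$. The left side converges to $-\sigma\chi'(0)\int(\cosh\rho-1)\,d\alpha$ by dominated convergence, using $0\leq\cosh\rho-1\lesssim|\rho|^2\in L^1$. The right side is $O\big(r\,\|g\|_{L^2}\,\|[H,\chi_r]g\|_{L^2}\big)$ and tends to $0$: writing $[H,\chi_r]=2i[P,\chi_r]$ and $g=\sinh\rho=-\tfrac{\sigma}{2c^2}\omega_\alpha$ from the equation, we get $r\,\|[H,\chi_r]g\|_{L^2}=\tfrac\sigma{c^2}\,r\,\|[P,\chi_r]\omega_\alpha\|_{L^2}\to0$ by Lemma~\ref{l:com} applied to $\omega\in L^2$. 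Hence $\int(\cosh\rho-1)\,d\alpha=0$, which forces $\rho\equiv0$, hence $\log(1+\W)\equiv0$, hence $\W\equiv0$ and $W\equiv0$.

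\textbf{Main obstacle.} The conceptual point is the choice of multiplier: one wants the rescaled identity to retain a single sign-definite term vanishing only at $\W=0$, with the remainder a commutator controlled by Lemma~\ref{l:com}; pairing the $H$-form of the scalar equation with $\chi_r\sinh\rho$ does exactly this because $\rho_\alpha\sinh\rho$ is a perfect derivative and $\int gHg=0$. The most delicate technical point is checking that the reduction is genuinely lossless and carrying out the regularity bootstrap that puts $\rho,\rho_\alpha,\sinh\rho\in L^2$ and forces decay at infinity; once that is in place the argument runs parallel to Section~\ref{s:g}.
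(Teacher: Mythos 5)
Your proposal is correct and follows essentially the same route as the paper: reduce to the scalar equation $-\sigma\omega_\alpha = 2c^2\sinh\rho$ via $\log(1+\W)=\rho+i\omega$, bootstrap to $\rho,\rho_\alpha,\sinh\rho\in L^2$, and run the $\chi_r$ cutoff/commutator argument of Section~\ref{s:g} against the coercive term $\int\chi_r'(\cosh\rho-1)\,d\alpha$. The only (cosmetic) differences are that you test the Hilbert-transformed equation with $\chi_r\sinh\rho$ rather than the original form with $\chi_r\rho_\alpha$, and you feed Lemma~\ref{l:com} the identity $\sinh\rho=-\tfrac{\sigma}{2c^2}\omega_\alpha$ instead of applying it directly to $\rho_\alpha$ with $\rho\in L^2$; both give the same identity and the same conclusion.
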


\begin{proof}
  If $c = 0$ then from the second equation we immediately obtain $\Im
  \W = 0$ which yields $\W = 0$.  Suppose now that $c \neq 0$. We
  first improve the regularity of $\W$ both at low and at high
  frequencies.

At low frequencies we project in the first equation to obtain
\[
i \sigma  P \partial_\alpha \frac{1+\W}{|1+\W|} = c^2  \W .
\]
 Next we integrate once,
\[
i \sigma  P  \left( \frac{1+\W}{|1+\W|}-1 \right ) = c^2  W .
\]
Now we argue as in the case of gravity waves.
Since $\W \in \dot B^{\frac12}_{2,1}$, by Moser estimates  
this yields n $W \in \dot{B}^{\frac12}_{2,1}$.
Now by interpolation we get $\W \in L^2$, which after reiteration
 yields $W \in L^2$ and also $W \in L^\infty$. 

At high frequencies we work with the function
\[
T := \log (1+ \W) \in L^\infty \cap L^2,
\]
which is also holomorphic. 

From the first equation \eqref{1cc-re} we get $\Im T_\alpha \in L^2$, 
which shows that $T_\alpha \in L^2$, and eventually $\W_\alpha \in L^2$.
Differentiating and repeating the argument it follows that $\W \in H^\infty$.

Combining the low and high frequency information, it now suffices to prove 
that the equation \eqref{1cc-re} has no solutions $\W \in H^\infty$ with 
$|1+\W| \geq \delta > 0$. For this we switch to polar coordinates, denoting 
\[
T = \log (1 +\W) =: U+ iV  \in H^\infty.
\]
Substituting this into \eqref{1cc-re}  we have
\[
-\sigma V_\alpha e^{iV} =  c^2 ( e^{U+iV} - e^{-U+iV} ) ,
\]
or equivalently 
\[
-\sigma V_\alpha = 2 c^2 \sinh U  .
\]
We rewrite this in the form
\[
H U_\alpha = 2 c^2 \sinh U .
\]
Then we repeat the argument for gravity waves, multiplying by $\chi_r U_\alpha$ and integrating by parts,
using the same commutator bound as in Lemma~\ref{l:com} (which holds equally for the Hilbert transform).
This yields $U = 0$, and then $V=const$. Then we must have $\W = const$ and further $\W = 0$ since $\W \in L^2$.

\end{proof}

\bibliographystyle{abbrv}
\bibliography{refs}

\end{document}